\documentclass[11pt, a4paper]{amsart}

\usepackage{a4, a4wide}
\usepackage{amssymb}
\usepackage{amsmath}
\usepackage{amsthm}
\usepackage{amstext}
\usepackage{amscd}
\usepackage{latexsym}
\usepackage{graphics}
\usepackage{color}
\usepackage{enumitem}
\usepackage[all]{xy}
\usepackage[textsize=scriptsize]{todonotes}

\usepackage[colorlinks, pagebackref]{hyperref}
\hypersetup{
  colorlinks=true,
  citecolor=blue,
  linkcolor=blue,
  urlcolor=blue}

\makeatletter
\def\@tocline#1#2#3#4#5#6#7{\relax
  \ifnum #1>\c@tocdepth 
  \else
    \par \addpenalty\@secpenalty\addvspace{#2}%
    \begingroup \hyphenpenalty\@M
    \@ifempty{#4}{%
      \@tempdima\csname r@tocindent\number#1\endcsname\relax
    }{%
      \@tempdima#4\relax
    }%
    \parindent\z@ \leftskip#3\relax \advance\leftskip\@tempdima\relax
    \rightskip\@pnumwidth plus4em \parfillskip-\@pnumwidth
    #5\leavevmode\hskip-\@tempdima
      \ifcase #1
      \or\or \hskip 2em \or \hskip 2em \else \hskip 3em \fi%
      #6\nobreak\relax
    \dotfill\hbox to\@pnumwidth{\@tocpagenum{#7}}\par
    \nobreak
    \endgroup
  \fi}
\makeatother

\theoremstyle{plain}

\newtheorem{theorem}{Theorem}[section]

\newtheorem{lemma}[theorem]{Lemma}
\newtheorem{corollary}[theorem]{Corollary}
\newtheorem{proposition}[theorem]{Proposition}

\theoremstyle{definition}

\newtheorem{remark}[theorem]{Remark}

\numberwithin{equation}{section}

\newcommand{\Ker}{{\rm Ker}}
\newcommand{\coker}{{\rm Coker}}

\newcommand{\Hom}{{\rm Hom}}

\newcommand{\im}{{\rm Im}}

\newcommand{\A}{{\mathbb A}}

\def\<{\langle}
\def\>{\rangle} 
\def\-{\overline} 
\def\~{\widetilde}
\def\^{\widehat}

\def\@{\mathcal}
\def\!{\mathscr}
\def\#{\mathbb}
\def\_{\underline}

\input{xy}
\xyoption{all}

\begin{document}

\title[Strong $\A^1$-invariance]{Corrigendum: Strong $\A^1$-invariance of $\A^1$-connected components of reductive algebraic groups (J. Topol. 16 (2023), no. 2, 634--649.)}

\author{Chetan Balwe}
\address{Department of Mathematical Sciences, Indian Institute of Science Education and Research Mohali, Knowledge City, Sector-81, Mohali 140306, India.}
\email{cbalwe@iisermohali.ac.in}

\author{Amit Hogadi}
\address{Department of Mathematical Sciences, Indian Institute of Science Education and Research Pune, Dr. Homi Bhabha Road, Pashan, Pune 411008, India.}
\email{amit@iiserpune.ac.in}

\author{Anand Sawant}
\address{School of Mathematics, Tata Institute of Fundamental Research, Homi Bhabha Road, Colaba, Mumbai 400005, India.}
\email{asawant@math.tifr.res.in}
\date{}
\thanks{The authors acknowledge the support of India DST-DFG Project on Motivic Algebraic Topology DST/IBCD/GERMANY/DFG/2021/1, SERB MATRICS Grant MTR/2023/000228 and the Department of Atomic Energy, Government of India, under project no. 12-R\&D-TFR-5.01-0500.}

\begin{abstract} 
The proof of \cite[Lemma 5.1]{BHS} is incomplete as it relies on some results in \cite{Choudhury-Hogadi}, the proof of which contains a gap.  The goal of this note is to give a complete and self-contained proof of \cite[Lemma 5.1]{BHS}.  
\end{abstract}

\maketitle

\setlength{\parskip}{4pt plus1pt minus1pt}
\raggedbottom

\section{Introduction}

The main theorem of \cite{BHS} is as follows: 

\begin{theorem}\cite[Theorem 1.1, Remark 1.2]{BHS}
\label{thm intro main}
If $G$ is an algebraic group over a perfect field $k$, then $\pi_0^{\#A^1}(G)$ is a strongly $\#A^1$-invariant sheaf. Consequently, for any Nisnevich locally
trivial $G$-torsor $\@P \to \@X$ in $\@H(k)$,
\[
P \to X \to BG
\]
is an $\#A^1$-fiber sequence.
\end{theorem}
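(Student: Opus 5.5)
We plan to prove Theorem~\ref{thm intro main} in three steps: reduce to reductive groups, identify $\pi_0^{\#A^1}(G)$ with a quotient of $G$ that is visibly a group sheaf, and then prove strong $\#A^1$-invariance through a Gersten/unramifiedness criterion. The consequence about fiber sequences will then follow from the standard criterion of Morel (and Asok--Hoyois--Wendt): a Nisnevich locally trivial $G$-torsor $P \to X$ gives an $\#A^1$-fiber sequence $P \to X \to BG$ as soon as $\pi_0^{\#A^1}(G)$ is strongly $\#A^1$-invariant. So the real content is the first assertion.

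\textbf{Step 1: reduction to the reductive case.} Over a perfect field $k$, $G_{\mathrm{red}}$ is a smooth group scheme, and $G$ and $G_{\mathrm{red}}$ have the same sheaf on smooth schemes. The unipotent radical $R_u$ of the connected component is $\#A^1$-contractible, since it is split unipotent over a perfect field and hence an iterated extension of $\mathbb{G}_a$'s. Consequently $G \to G/R_u$ is an $\#A^1$-weak equivalence. The finite étale group $\pi_0(G)$ is $\#A^1$-rigid, and the sequence $G^\circ \to G \to \pi_0(G)$ lets us reduce further to the case of connected reductive $G$.

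\textbf{Step 2: the connected reductive case.} Here I would use the isotropic/anisotropic decomposition. The central torus $Z$ and the derived group interact via $1 \to G^{\mathrm{der}} \to G \to G/G^{\mathrm{der}} = T$. For tori, strong $\#A^1$-invariance of $\pi_0^{\#A^1}(T) = T/\text{(image of $R$-trivial part)}$ is known from the literature on tori. For semisimple $G$, one identifies $\pi_0^{\#A^1}(G)$ with the sheaf associated to $G(-)/R$, that is, the Nisnevich sheafification of $\mathcal{S}(G)$ iterated (the ``$\mathcal{S}$-construction''). Concretely, one shows that the map $G \to \pi_0^{\#A^1}(G)$ is identified with $G \to \mathcal{S}(G)$, where $\mathcal{S}(G)(U)$ is the quotient of $G(U)$ by naive $\#A^1$-homotopy, after sheafification. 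This is where \cite[Lemma 5.1]{BHS} enters: I expect it to state that $\mathcal{S}(G)$ already agrees with $\pi_0^{\#A^1}(G)$, that is, the $\mathcal{S}$-construction stabilizes after one step on essentially smooth henselian local schemes. Its self-contained proof is exactly what this note aims to provide.

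\textbf{Step 3: strong $\#A^1$-invariance.} With this identification, $\pi_0^{\#A^1}(G)$ is a Nisnevich sheaf of groups, because naive homotopy classes form a group, the homotopies being $G$-valued. By Morel's theorem it then suffices to show that it is $\#A^1$-invariant and unramified. $\#A^1$-invariance means $\mathcal{S}(G)(U\times\#A^1) = \mathcal{S}(G)(U)$ for $U$ henselian local. This reduces to the assertion that a morphism $\#A^1_U \to G$ is naively homotopic to a constant one, and for isotropic groups this is provided by elementary subgroup/Quillen--Suslin-type patching arguments. Unramifiedness follows from injectivity of $G(R)/R \to G(K)/R$ for a discrete valuation ring $R$, which is a purity statement of Colliot-Thélène--Ojanguren type.

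The main obstacle is the stabilization lemma \cite[Lemma 5.1]{BHS}, since the earlier argument relied on \cite{Choudhury-Hogadi}. I would attempt a direct proof of it. First I would show that for henselian local essentially smooth $U$, any $\#A^1$-chain of ghost homotopies in $\mathcal{S}^2(G)$ can be straightened into a genuine homotopy. To do this, I would use the group structure to translate every homotopy so that it starts at the identity, and then invoke the fact that homotopies starting at the identity land in the subgroup generated by unipotent radicals of parabolics, which is itself generated by images of $\#A^1$'s.
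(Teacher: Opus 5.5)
You are right that the fiber-sequence statement follows from strong $\A^1$-invariance via Morel's Theorem 6.50. But your treatment of the reductive case has a genuine gap, and it starts with a misidentification. \cite[Lemma 5.1]{BHS} (Lemma~\ref{lemma isogeny} here) is not a stabilization result for the $\mathcal{S}$-construction. It says that strong $\A^1$-invariance of $\pi_0^{\A^1}$ descends along a central isogeny $\tilde G\to G$ of semisimple groups with multiplicative kernel $\mu$.

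The statement you put in its place, $\mathcal{S}(G)\cong\pi_0^{\A^1}(G)$, is false for anisotropic groups. If $G$ is anisotropic, every morphism $\A^1_k\to G$ is constant, so $\mathcal{S}(G)(k)=G(k)$. This matches your own mechanism: there are no proper parabolics, so the subgroup you invoke is trivial. On the other hand, for anisotropic semisimple simply connected $G$ one has $\pi_0^{\A^1}(G)(k)=G(k)/R$ (Balwe--Sawant; \cite{BHS}). Take $G=SL_1(D)$ with $D$ a quaternion division algebra over $\mathbb{Q}$; here $G(\mathbb{Q}[t])=G(\mathbb{Q})$ is immediate from degrees of reduced norms. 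Then $G(\mathbb{Q})/R=SK_1(D)=1$, while $G(\mathbb{Q})\neq 1$. So $\mathcal{S}(G)$ is not even $\A^1$-invariant there (if it were, it would coincide with $\pi_0^{\A^1}(G)$), and no straightening of ghost homotopies can succeed. Your Step 3, whose $\A^1$-invariance argument uses elementary subgroups and hence isotropy, never reaches the anisotropic case, which is exactly the hard one.

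Two smaller errors point the same way. Tori are $\A^1$-rigid, so $\pi_0^{\A^1}(T)=T$ rather than a quotient by $R$-trivial elements; already $\mathbb{G}_m(F)/R=1$. And Morel does not prove that an unramified $\A^1$-invariant sheaf of groups is strongly $\A^1$-invariant. The crux is $H^1(U\times\A^1,-)\cong H^1(U,-)$, which is precisely what the proof of Lemma~\ref{lemma correctedch} has to secure.

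More fundamentally, your plan has no mechanism for transporting strong $\A^1$-invariance through extensions and isogenies; Step 2 only asserts that $G^{\rm der}$, the torus and the simply connected cover control $G$. This requires an exact sequence of $\A^1$-homotopy sheaves. For $\tilde G\to G$ it is delicate, because $\tilde G\to G$ is a $\mu$-torsor only \'etale-locally, so the Nisnevich homotopy quotient $Q$ of the $\tilde G$-action on $G$ is not $B_{\text{\'et}}\mu$ and must be compared with it. That comparison is the content of the paper. Propositions~\ref{prop comparing quotients} and~\ref{prop Q a1-local}, with Lemmas~\ref{lemma monomorphism} and~\ref{lemma a1 local}, show that $Q$ is simplicially equivalent to the $\A^1$-local object $\pi_0(Q)\times_{\pi_0(B_{\text{\'et}}\mu)}B_{\text{\'et}}\mu$. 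Morel's Theorem 6.50, which uses the hypothesis on $\tilde G$, then gives the exact sequence $\pi_1^{\A^1}(B_{\text{\'et}}\mu)\to\pi_0^{\A^1}(\tilde G)\to\pi_0^{\A^1}(G)\to\pi_0^{\A^1}(B_{\text{\'et}}\mu)$ of Corollary~\ref{cor key LES}.

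From this, $\pi_0^{\A^1}(G)$ is an extension of its image in $\mathcal{H}^1_{\text{\'et}}(\mu)$ by the image of $\pi_0^{\A^1}(\tilde G)$. The first piece is strongly $\A^1$-invariant because $\mathcal{H}^1_{\text{\'et}}(\mu)$ is, and the cokernel injects into the $\A^1$-invariant sheaf $\mathcal{H}^1_{\text{\'et}}(\tilde G)$. The second piece is a quotient by a central subgroup, with centrality checked over fields via conjugation. It is handled by Lemma~\ref{lemma correctedch} together with the $\A^1$-invariance of $\pi_0^{\A^1}(G)$ from \cite[Corollary 5.2]{Choudhury}. None of these ingredients, or any substitute for them, appears in your proposal.
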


One of the key steps in the proof of Theorem \ref{thm intro main} is \cite[Lemma 5.1]{BHS}.  The proof of \cite[Lemma 5.1]{BHS} given in \cite{BHS} is incomplete for two reasons -
\begin{enumerate}[label=$(\arabic*)$]
\item it relies on a result in \cite{Choudhury-Hogadi}, the proof of which contains a gap, and 
\item the existence of the key exact sequence at the bottom of \cite[page 646]{BHS} (in particular, the application of \cite[Theorem 6.50]{Morel-book}) is not properly justified.
\end{enumerate}   
In this note, we give a complete and self-contained proof of \cite[Lemma 5.1]{BHS}, thereby completing the proof of Theorem \ref{thm intro main}.   

We briefly outline the contents of this note.  Section \ref{section preliminaries} contains preliminaries on long exact sequences of simplicial homotopy groups associated with homotopy principal fibrations of simplicial sheaves.  In Section \ref{section key LES}, we establish the key exact sequence mentioned in $(2)$ above.  In Section \ref{section main result}, we use the results of Section \ref{section key LES} along with an argument circumventing the use of results from \cite{Choudhury-Hogadi} to give a proof of \cite[Lemma 5.1]{BHS} (see Lemma \ref{lemma isogeny}).

\subsection*{Notations and conventions}

We will follow the notation used in \cite{Morel-Voevodsky}, \cite{Morel-book} and \cite{BHS}.  The letter $k$ will always denote a perfect field.  Let $\@H_s(k)$ and $\@H(k)$ denote the pointed unstable simplicial and $\#A^1$-homotopy categories introduced by Morel and Voevodsky in \cite{Morel-Voevodsky}.  These are obtained as homotopy categories associated with the locally injective model structrure on the category $\Delta^{op}Shv(Sm/k)_{\rm Nis}$ of simplicial Nisnevich sheaves of sets on the category $Sm/k$ of essentially smooth, separated schemes on $k$ and its Bousfield localization at the class of projection maps $\@X \times \#A^1 \to \@X$, respectively.  For each $n \in \#N$ and every pointed object $\@X \in \Delta^{op}Shv(Sm/k)_{\rm Nis}$, the simplicial homotopy sheaf $\pi_n(\@X)$ is defined to be the Nisnevich sheafification of the presheaf on $Sm/k$
\[
U \mapsto \Hom_{\@H_s(k)}(\Sigma^nU_+, \@X)
\]
and the $\A^1$-homotopy sheaf $\pi_n^{\#A^1}(\@X)$ is defined to be the Nisnevich sheafification of the presheaf on $Sm/k$
\[
U \mapsto \Hom_{\@H(k)}(\Sigma^nU_+, \@X),
\]
where we suppress the basepoint whenever it is clear from the context.

\section{Principal fibrations of simplicial sheaves on a Grothendieck site}
\label{section preliminaries}

In this section, we collect preliminaries regarding the long exact sequence of homotopy groups associated with principal fibrations of simplicial sheaves that will be used in the proof of the main result.  The key result here is Proposition \ref{prop comparing quotients}, which allows us to compare homotopy quotients in the categories of Nisnevich and \'etale simplicial sheaves. The rest of the material in this section is expected to be well-known to experts and is included only for the convenience of the reader.  

Let $T$ be a Grothendieck site with enough points. We work with the category $\Delta^{op}Shv(T)$ of simplicial sheaves on $T$ endowed with the locally injective model structure.  We will denote by $\ast$ the final object of $T$.

\begin{proposition}
\label{prop local fibration}
Let $\@P \to \@X$ be a local fibration and $\@Q \to \@X$ be any morphism in $\Delta^{op}Shv(T)$.  Then we have a weak equivalence 
\[
\@P \times_{\@X} \@Q \cong \@P \times^h_{\@X} \@Q 
\]
between the fiber product and the homotopy fiber product of $\@P$ and $\@Q$ over $\@X$.
\end{proposition}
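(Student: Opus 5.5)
Since $T$ has enough points, a morphism in $\Delta^{op}Shv(T)$ is a weak equivalence in the locally injective model structure exactly when it induces a weak equivalence of simplicial sets on every stalk. The natural strategy is therefore to reduce to the corresponding classical statement for simplicial sets, using stalks. A local fibration is by definition a map whose stalks are all Kan fibrations.

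First I will produce a concrete model for the homotopy fiber product. Factor $\@Q \to \@X$ in the injective model structure as a trivial cofibration $\@Q \to \@Q'$ followed by an injective fibration $\@Q' \to \@X$. Then take $\@P \times^h_{\@X} \@Q := \@P' \times_{\@X'} \@Q'$, where $\@X \to \@X'$ is a fibrant replacement and the maps to $\@X'$ are replaced by fibrations. Since the injective model structure on simplicial sheaves is right proper (a standard fact, checked stalkwise), it suffices instead to use $\@P \times_{\@X} \@Q'$. Right properness makes this a valid model even when $\@X$ is not fibrant, because pullback along the fibration $\@Q' \to \@X$ preserves weak equivalences. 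The canonical comparison map is then
\[
\@P \times_{\@X} \@Q \to \@P \times_{\@X} \@Q',
\]
and it suffices to show that this map is a weak equivalence.

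Next, check the map on stalks. Stalk functors are filtered colimits of sections, so they commute with finite limits. At a point $x$ the map therefore becomes
\[
\@P_x \times_{\@X_x} \@Q_x \to \@P_x \times_{\@X_x} \@Q'_x.
\]
Here $\@P_x \to \@X_x$ is a Kan fibration because $\@P \to \@X$ is a local fibration, and $\@Q_x \to \@Q'_x$ is a weak equivalence of simplicial sets. Right properness of simplicial sets says that pulling back a weak equivalence along a Kan fibration gives a weak equivalence. Here the pullback is taken along the Kan fibration $\@P_x \to \@X_x$: the map is the base change of $\@Q_x \to \@Q'_x$ over $\@X_x$ along $\@P_x \to \@X_x$. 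So the map is a weak equivalence on every stalk, and hence a weak equivalence of simplicial sheaves.

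The one point needing care is the identification of the model $\@P \times_{\@X} \@Q'$ with the homotopy fiber product. I would justify it using right properness of the locally injective structure, which follows from Jardine's results, since injective fibrations are in particular local fibrations and weak equivalences are detected stalkwise. Alternatively, I could replace $\@P$ by a fibration as well and repeat the stalkwise argument symmetrically. The remaining steps are formal.
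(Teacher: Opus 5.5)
Your proposal is correct and is essentially the paper's proof: you factor $\mathcal{Q} \to \mathcal{X}$ as a weak equivalence followed by a fibration, use $\mathcal{P} \times_{\mathcal{X}} \mathcal{Q}'$ as the model for the homotopy fibre product, and check that $\mathcal{P} \times_{\mathcal{X}} \mathcal{Q} \to \mathcal{P} \times_{\mathcal{X}} \mathcal{Q}'$ is a weak equivalence on stalks, using that stalks commute with finite limits and that simplicial sets are right proper. The only addition is your justification, via right properness of the locally injective structure, that $\mathcal{P} \times_{\mathcal{X}} \mathcal{Q}'$ computes the homotopy fibre product; the paper simply asserts this point.
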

\begin{proof}
We factorize $\@Q \to \@X$ as $\@Q \to \@Q' \to \@X$ where $\@Q \to \@Q'$ is a weak equivalence and $\@Q' \to \@X$ is a fibration. We then have the diagram of cartesian squares
\[
\xymatrix{
\@P \times_{\@X} \@Q \ar[r] \ar[d] & \@P \times_{\@X}\@Q' \ar[r] \ar[d] & \@P \ar[d] \\
\@Q \ar[r] & \@Q' \ar[r] & \@X
}
\] 
where $\@P \times_{\@X} \@Q'$ is (weakly equivalent to) the homotopy fibre product $\@P \times^h_{\@X} \@Q$. We wish to prove that the top left arrow is a weak equivalence. It is enough to prove this on stalks. 

Let $x$ denote a point of the site. We wish to prove that 
\[
(\@P \times_{\@X} \@Q)_x \to (\@P \times_{\@X}\@Q')_x 
\]
is a weak equivalence. Since filtered colimits commute with finite limits in the category of sheaves of sets (and hence also in the category of sheaves of simplicial sets), this is equivalent to proving that
\[
\@P_x \times_{\@X_x} \@Q_x \to \@P_x \times_{\@X_x}\@Q'_x 
\]
is a weak equivalence. This is true since $P_x \to X_x$ is a fibration, $\@Q_x \to \@Q'_x$ is a weak equivalence and since the model category of simplicial sets is right proper. 
\end{proof}

We briefly recall the terminology related to group actions (see \cite[page 128]{Morel-Voevodsky} and \cite[page 170]{Morel-book} for details):

Let $\@G$ be a simplicial group sheaf. Recall that a \emph{principal $\@G$-fibration} (or a \emph{principal $\@G$-bundle}) is a morphism $\@P \to \@X$ along with a free $\@G$-action on $\@P$ over $\@X$ such that the induced morphism $\@P/\@G \to \@X$ is an isomorphism. 

More generally, if $\@P$ is a simplicial sheaf with a $\@G$-action, we consider the diagonal action of $\@G$ on $E\@G \times \@P$, which is easily seen to be free. We will refer to the quotient $(E\@G \times \@P)/\@G$ as the \emph{homotopy quotient} of the $\@G$-action on $\@P$.   A morphism $\@P \to \@X$, along with a $\@G$-action on $\@P$ over $\@X$, is said to be a \emph{homotopy principal $\@G$-fibration} if it induces a weak equivalence between $\@X$ and the homotopy quotient of the $\@G$-action on $\@P$. 

\begin{corollary}
\label{cor principal G-fibration}
Let $\@G$ be a simplicial group sheaf and let $\@P \to \@X$ be principal $\@G$-fibration in $\Delta^{op}Shv(T)$. Then, for any global point $x: \ast \to \@X$, we have a weak equivalence
\[
\@P \times_{\@X,x} * \cong \@P\times_{\@X,x}^h \ast.
\]
\end{corollary}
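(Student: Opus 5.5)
The plan is to deduce the corollary from Proposition \ref{prop local fibration}, applied with $\@Q = \ast$ and the morphism $x : \ast \to \@X$. It therefore suffices to show that a principal $\@G$-fibration $\@P \to \@X$ is a local fibration, i.e.\ that for every point $p$ of the site $T$ the stalk map $\@P_p \to \@X_p$ is a Kan fibration of simplicial sets. Once this is established, Proposition \ref{prop local fibration} gives directly
\[
\@P \times_{\@X,x} \ast \cong \@P \times^h_{\@X,x} \ast .
\]

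To check the local fibration property, I would pass to stalks. Taking stalks at a point $p$ commutes with finite limits and with colimits, and in particular with quotients by group actions. Hence $\@G_p$ is a simplicial group acting freely on $\@P_p$, and $\@P_p/\@G_p \cong \@X_p$. So $\@P_p \to \@X_p$ is a principal $\@G_p$-bundle of simplicial sets in the classical sense.

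The key step is then the classical fact that a principal bundle of simplicial sets with a free action of a simplicial group is a Kan fibration. This appears in May's \emph{Simplicial objects in algebraic topology} and in Goerss--Jardine, and holds because simplicial groups are Kan and free actions give twisted cartesian products. One verifies it degreewise: choose a section of the free action, so that $\@P_p \cong \@G_p \times_\tau \@X_p$ is a twisted cartesian product, and any twisted cartesian product with Kan fibre group is a Kan fibration.

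The main subtlety is justifying that freeness and the quotient pass to stalks. Freeness means the map $\@G \times \@P \to \@P \times \@P$ is a monomorphism, which is a finite-limit condition and so is preserved by the exact stalk functor. The quotient is a coequalizer, which is also preserved by stalks. I expect the remaining argument to be routine citation.
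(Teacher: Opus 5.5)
Your proposal is correct and follows the paper's proof: the paper likewise observes that a principal $\mathcal{G}$-fibration is a local fibration, citing Goerss--Jardine (Chapter V, Corollary 2.6), and then applies Proposition \ref{prop local fibration} with $\mathcal{Q} = \ast$. You additionally spell out the stalkwise reduction to the classical simplicial-set fact, checking that freeness and the quotient pass to stalks, which the paper leaves implicit.
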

\begin{proof}
Any principal $\@G$-fibration is a local fibration by \cite[Chapter 5, Corollary 2.6]{Goerss-Jardine}. Thus, the result follows from Proposition \ref{prop local fibration}. 
\end{proof}


We now specialize to the situation to which we wish to apply the above results. Let $k$ be a field and let $Sm/k$ denote the category of smooth, essentially of finite type, separated schemes over $k$.  Let $\pi: (Sm/k)_{\text{\'et}} \to (Sm/k)_{\rm Nis}$ be the obvious morphism of sites. We will work with the categories $\Delta^{op}Shv(Sm/k)_{\text{\'et}}$ and $\Delta^{op}Shv(Sm/k)_{\rm Nis}$ of simplicial \'etale and Nisnevich sheaves on $Sm/k$, respectively, equipped with the locally injective model structure. 

\begin{proposition}
\label{prop comparing quotients}
Let $H$ be an \'etale (simplicially discrete) group sheaf on $Sm/k$.  Let $P$ be a fibrant object in $\Delta^{op}Shv(Sm/k)_{\text{\'et}}$ with an $H$-action. Let $p: P \to Q$ be the homotopy quotient of this action in $\Delta^{op}Shv(Sm/k)_{\rm Nis}$ and let $\~p: P \to \~Q$ be the homotopy quotient in $\Delta^{op}Shv(Sm/k)_{\text{\'et}}$. Let $Q_{\text{\'et}} = \mathbf{R}\pi_*(Q)$. Let $U \in Obj(Sm/k)$ and let $u: U \to \@P$ be any morphism. Then, for any $i \geq 1$, the homomorphism $\pi_i(Q, p \circ u) \to \pi_i(Q_{\rm et}, \~p \circ u)$ is an isomorphism. 
\end{proposition}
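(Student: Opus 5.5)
Let me write $Q = (EH \times P)/H$ (Nisnevich sheaf quotient) and $\~Q = (EH\times P)/H$ computed with étale sheafification. The plan is to compare the two homotopy quotients through the long exact sequences of the principal $H$-fibrations $EH \times P \to Q$ and $EH\times P \to \~Q$, using that $P$ is étale fibrant, so that $\mathbf{R}\pi_* P = \pi_* P = P$ as a Nisnevich simplicial sheaf.

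\textbf{Step 1: fibre sequences.} By Corollary \ref{cor principal G-fibration}, applied in the Nisnevich site, the fibre of $EH\times P \to Q$ over the point $p\circ u$ (after base change to $U$, i.e.\ working over the slice site $Sm/U$, which also has enough points) is a homotopy fibre. The fibre is $H$ (discrete), and $EH\times P \simeq P$. Hence for $i\ge 2$ we get $\pi_i(P,u)\cong \pi_i(Q,p\circ u)$. For $i=1$ we get the exact sequence
\[
1\to \pi_1(P,u)\to \pi_1(Q,p\circ u)\to H\to \pi_0(P).
\]
Similarly, in the étale topology we get $\pi_i(P,u)\cong\pi_i(\~Q,\~p\circ u)$ étale-locally, together with the analogous étale sequence.

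\textbf{Step 2: identify $Q_{\rm et}$.} The natural map $Q\to \pi_*\~Q$ factors through $Q\to Q_{\rm et}=\mathbf{R}\pi_*Q$. Here I would argue that $\pi^* Q\simeq \~Q$, since étale sheafification commutes with the quotient construction. Consequently $Q_{\rm et}=\mathbf{R}\pi_*\pi^*Q\simeq \mathbf{R}\pi_*\~Q$, and $\~Q$ is the base of an étale principal $H$-fibration with étale-fibrant total space $EH\times P$. Applying $\mathbf{R}\pi_*$, which preserves homotopy fibre sequences, to $H\to EH\times P\to \~Q$ gives a Nisnevich fibre sequence
\[
\mathbf{R}\pi_*H\to \pi_*(EH\times P)\simeq P\to Q_{\rm et}.
\]
Since $H$ is discrete, $\mathbf{R}\pi_*H$ has $\pi_0=\pi_*H=H$ (as a Nisnevich sheaf), and its higher homotopy vanishes because the space of sections of a discrete sheaf is discrete. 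So we get a map of fibre sequences over $Q\to Q_{\rm et}$ in which the maps on fibres ($H$) and on total spaces ($P$) are weak equivalences.

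\textbf{Step 3: five lemma.} Comparing the long exact sequences of simplicial homotopy sheaves based at $p\circ u$ and $\~p\circ u$, the five lemma gives isomorphisms for $i\ge 2$ directly. For $i=1$ the sequence ends with $H\to\pi_0(P)$, which is only a map of pointed sets, so I would instead use a variant of the five lemma with group actions: $\pi_1(Q)\to H$ has image equal to the stabilizer of the component of $u$, and the same description holds on the étale side, giving an isomorphism.

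\textbf{Main obstacle.} The delicate step is Step 2: justifying $\mathbf{R}\pi_*\~Q\simeq Q_{\rm et}$ and that $\mathbf{R}\pi_*$ of the étale principal fibration is the Nisnevich homotopy fibre sequence with fibre $H$. In particular one must check that the base points lift along $u$ from $U$, rather than only from $\ast$, which is why I would pass to the slice site over $U$. I would try first to show this by computing sections: because $EH\times P$ is étale fibrant and $H$ acts freely, $\~Q$ is étale-locally fibrant as a quotient, and $\Gamma(V,\mathbf{R}\pi_*\~Q)$ is the homotopy orbit $P(V)_{hH}$ twisted by étale $H$-torsors. Restricting to the component through $u$ isolates the trivial-torsor part, which matches $Q$ on stalks.
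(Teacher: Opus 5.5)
Your proposal is correct and follows essentially the same route as the paper: restrict to $Sm/U$, use Corollary \ref{cor principal G-fibration} to get the Nisnevich fibre sequence $H \to P \to Q$ over $p\circ u$, apply $\mathbf{R}\pi_*$ to the \'etale one (using $\mathbf{R}\pi_*H = H$ and $\mathbf{R}\pi_*P = P$), and compare. The only difference is that the paper rotates once, identifying $\Omega_{p\circ u}Q$ and $\Omega_{\~p\circ u}Q_{\text{\'et}}$ as homotopy fibres over $u$ of the same map $H \to P$; this avoids your degree-one five-lemma bookkeeping, and the twisted homotopy-orbit computation in your last paragraph is unnecessary since Step 2 already settles that point.
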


\begin{proof}
We will apply the restriction functors $\Delta^{op}Shv(Sm/k)_{\rm Nis} \to \Delta^{op}Shv(Sm/U)_{\rm Nis}$ and $\Delta^{op}Shv(Sm/k)_{\text{\'et}} \to \Delta^{op}Shv(Sm/U)_{\text{\'et}}$. To avoid making the notation cumbersome, we will denote the restricted simplicial sheaves, such as $P|_U$, $Q|_U$, etc. by the symbols $P$, $Q$, etc., respectively. We note that these restriction functors are exact and will thus preserve fibre products. 

Since the point $p \circ u: U \to Q$ admits a lift to $P$, the fibre of $P \to Q$ over $p \circ u$ is isomorphic to $H$. By Corollary \ref{cor principal G-fibration}, this is actually the homotopy fibre over $P \to Q$ over $p \circ u$. Thus, we have the simplicial fibration sequence 
\[
H \to  P \to Q
\]
in $\Delta^{op}Shv(Sm/U)_{\rm Nis}$. We note that the isomorphism of $H$ with the fibre of $P \to Q$ can be chosen in such a way that the morphism $H \to P$ is $H$-equivariant and if $e_H: U \to H$ is the identity element of the group $H(U)$, the composition 
\[
U \stackrel{e_H}{\to} H \to P
\]
is equal to $u$. The homotopy fibre of $H \to P$ over $u$ is $\Omega_{p \circ u} Q$. This gives us the simplicial fibration sequence
\begin{equation}
\label{fibration sequence 1}
\Omega_{p \circ u} Q \to H \to P \text{.}
\end{equation}

Similarly, we have the simplicial fibration sequence 
\[
\Omega_{\~p \circ u} \~Q \to H \to P
\]
in $\Delta^{op}Shv(Sm/U)_{\text{\'et}}$. Applying $\mathbf{R}\pi_*$, we get the simplicial fibration sequence 
\begin{equation}
\label{fibration sequence 2}
\Omega_{\~p \circ u} Q_{et} \to H \to P \text{.}
\end{equation}

Note that the morphisms $H \to P$ in both the fibration sequences (\ref{fibration sequence 1}) and (\ref{fibration sequence 2}) are identical since they are both $H$-equivariant and map the identity section to $u$. Comparing the homotopy fibre of $H \to P$ over the point $u: U \to P$ in the two fibration sequences, we see that $\Omega_{p \circ u} Q$ is simplicially equivalent to $\Omega_{\~p \circ u} Q_{et}$. The result immediately follows from this equivalence. 
\end{proof}

\begin{remark}
Actually, the proof of Proposition \ref{prop comparing quotients} shows that for $i>0$, the morphism $Q \to Q_{\text{\'et}}$ induces an isomorphism of groups 
\[
Hom_{\@H_s(k)}(\Sigma^i U_+, Q) \xrightarrow{\sim} Hom_{\@H_s(k)}(\Sigma^i U_+, Q_{\text{\'et}}) 
\]
for any $U \in Obj(Sm/k)$. In other words, the canonical morphism $Q \to Q_{\text{\'et}}$ induces an isomorphism of \emph{presheaves} of $i$-th homotopy groups for $i>0$. We have only stated the result in terms of the associated homotopy group sheaves because that is what we will require in Section \ref{section key LES}.

If we consider the special case $P= \ast$, the homotopy quotient $Q$ is just the classifying space $B H_{\rm Nis}$ (where $H_{\rm Nis} := \pi_*(H)$) and $Q_{\text{\'et}}$ is the \'etale classifying space $B_{\text{\'et}} H$. We recall the following assertion from \cite[page 130]{Morel-Voevodsky}:
\[
\Hom_{\@H_s(k)}(\Sigma^i U_+, B_{\text{\'et}} H) = \begin{cases}
H^1_{\text{\'et}}(U, H), \quad \text{if $i=0$}; \\
H(U), \quad \quad \quad \text{if $i=1$}; \\
0, \quad \quad \quad \quad \quad \text{if $i>1$}
\end{cases}
\]
For $i>0$, this is just the statement that the morphism $BH_{\rm Nis} \to B_{\text{\'et}}H$ induces an isomorphism of $n$-th homotopy group presheaves. Thus, Proposition \ref{prop comparing quotients} can be viewed as a generalization of this statement to homotopy quotients of actions of $H$ on other spaces.
\end{remark}

\section{A key long exact sequence}
\label{section key LES}

Let $\rho: \~G \to G$ be a central isogeny of semisimple algebraic groups with kernel a group $\mu$ of multiplicative type.  The morphism $\~G \to G$ is an \'etale-locally trivial $\mu$-torsor.  Let $\~Q$ and $Q$ denote the \'etale and Nisnevich homotopy quotients of the action of $\~G$ on $G$, respectively.  Let $\pi: \Delta^{op}Shv(Sm/k)_{\text{\'et}} \to \Delta^{op}Shv(Sm/k)_{\rm Nis}$ denote the canonical morphism.  Observe that $G$ is a fibrant object in $\Delta^{op}Shv(Sm/k)_{\text{\'et}}$ and that $\~Q$ and $Q$ can be explicitly described as the quotients
\[
Q = (G \times E\~G)/^{\text{Nis}}\~G
\]
and
\[
\~Q = (G \times E\~G)/^{\text{\'et}}\~G =B \mu
\]
up to weak equivalence.  Note also that 
\[
Q_{\text{\'et}}:= \mathbf{R}\pi_* \~Q = B_{\text{\'et}} \mu.
\]
Thus, there is a canonical morphism $Q \to B_{\text{\'et}} \mu$.

\begin{lemma}
\label{lemma monomorphism}
The morphism $\pi_0(Q) \to \pi_0(B_{\text{\'et}} \mu)$ is a monomorphism. 
\end{lemma}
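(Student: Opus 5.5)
Both sheaves are Nisnevich sheaves, so we argue on stalks, i.e. on henselian local essentially smooth $k$-schemes $S = \mathrm{Spec}\, R$. It suffices to show that $\pi_0(Q)(S) \to \pi_0(B_{\text{\'et}}\mu)(S) = H^1_{\text{\'et}}(S,\mu)$ is injective for every such $S$.

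\textbf{Step 1: describe $\pi_0(Q)(S)$.} Since $Q$ is the Nisnevich homotopy quotient $(G \times E\~G)/\~G$ and $S$ is henselian local, every Nisnevich $\~G$-torsor over $S$ is trivial. Hence the sections of $\pi_0(Q)$ over $S$ are the orbits of $\~G(S)$ acting on $G(S)$, that is,
\[
\pi_0(Q)(S) = G(S)/\rho(\~G(S)).
\]
To justify this, I would use the stalkwise description of the simplicial quotient: its stalk is the simplicial set $(G_S \times E\~G_S)/\~G_S$, whose $\pi_0$ is the orbit set because $E\~G_S$ is connected and the action is free.

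\textbf{Step 2: identify the map.} The map to $H^1_{\text{\'et}}(S,\mu)$ sends $g \in G(S)$ to the class of the $\mu$-torsor $\rho^{-1}(g) \subset \~G_S$, which is the connecting map of the étale short exact sequence $1 \to \mu \to \~G \to G \to 1$. I would check that this agrees with the canonical map $Q \to \mathbf{R}\pi_*\~Q = B_{\text{\'et}}\mu$. This comes down to the fact that, on the étale side, the fibre of $G \to \~Q \simeq B\mu$ over the base point is $\~G$. Concretely, the torsor $\~G \to G$ is classified by a map $G \to B_{\text{\'et}}\mu$, and this map factors through $Q$.

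\textbf{Step 3: injectivity.} Take $g, g' \in G(S)$ with the same image in $H^1_{\text{\'et}}(S,\mu)$. Since $\mu$ is central, the nonabelian cohomology sequence
\[
\~G(S) \to G(S) \xrightarrow{\delta} H^1_{\text{\'et}}(S,\mu)
\]
has the property that $\delta$ is a group homomorphism; this uses centrality, since $\delta(gg') = \delta(g)\delta(g')$ by twisting. Its kernel is exactly $\rho(\~G(S))$. Therefore $\delta(g) = \delta(g')$ forces $g^{-1}g' \in \rho(\~G(S))$, so $g$ and $g'$ lie in the same orbit. This gives injectivity of $\pi_0(Q)(S) \to \pi_0(B_{\text{\'et}}\mu)(S)$, and hence the monomorphism of sheaves.

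The main obstacle is Step 2: checking honestly that the abstract map $Q \to \mathbf{R}\pi_*\~Q$ induces the coboundary $\delta$ on $\pi_0$. I would handle this through the fibration sequences used in the proof of Proposition \ref{prop comparing quotients}, applied with $P = G$ and $H = \~G$. That comparison of the fibres of $\~G \to G$ on both sides shows the map is compatible with the torsor classes. An alternative is to bypass the explicit formula: argue that two points of $Q(S)$ with the same image in $B_{\text{\'et}}\mu$ give an étale-local path, which lifts to an element of $\~G$ étale locally. Its obstruction to descending lies in $\mu$, and centrality lets us absorb it, giving a Nisnevich path.
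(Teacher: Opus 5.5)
Your proposal is correct and follows essentially the paper's proof. You pass to henselian local stalks, identify $\pi_0(Q)$ there with the orbit set $G(U)/\rho(\widetilde{G}(U))$, and use that, since $\mu$ is central, the coboundary $G(U)\to H^1_{\text{\'et}}(U,\mu)$ is a group homomorphism with kernel $\rho(\widetilde{G}(U))$. The differences are cosmetic: the paper writes this exact sequence in Galois cohomology of $\mathrm{Gal}(U^{sh}/U)$, and it leaves implicit the identification of the induced map on $\pi_0$ with the coboundary, which you treat separately as Step 2.
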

\begin{proof}
The assertion of the lemma can be verified stalkwise.  Let $U$ be an essentially smooth Henselian local scheme. We will show that $\pi_0(Q)(U) \to \pi_0(B_{\text{\'et}} \mu)(U) \cong H^1_{\text{\'et}}(U, \mu)$ is an injective map. It is easy to check that $\pi_0(Q)(U)$ is the quotient of the action of $\~G(U)$ on $G(U)$.  Let $H$ denote the Galois group of $U^{sh} \to U$, where $U^{sh}$ denotes the strict henselization of $U$.  Consider the short exact sequence
\[
1 \to \mu(U^{sh}) \to \~G(U^{sh}) \to G(U^{sh}) \to 1 
\]
of $H$-modules.  Since $\mu(U^{sh})$ is central in $\~G(U^{sh})$, the long exact sequence of Galois cohomology groups (see \cite[Chapter I, Section 5.7, Proposition 43]{Serre-Galois-cohomology}, for example) takes the form
\[
1 \to \mu(U) \to \~G(U) \to G(U) \to H^1_{\text{\'et}}(U, \mu) \to H^1_{\text{\'et}}(U, \~G) \to \cdots,
\]
where the connecting map $G(U) = G(U^{sh})^H  \to H^1(H, \mu(U^{sh})) =  H^1_{\text{\'et}}(U, \mu)$ is a group homomorphism.  Since $\pi_0(B_{\text{\'et}} \mu)(U) = H^1_{\text{\'et}}(U, \mu)$, the exactness of the above long exact sequence proves the lemma.
\end{proof}

\begin{lemma}
\label{lemma a1 local}
Let $\@X$ be an $\#A^1$-local object in $\Delta^{op}(Sm/k)_{\rm Nis}$. Let $\@F$ be a subsheaf of $\pi_0(\@X)$. Then, the sheaf $\@X \times_{\pi_0(\@X)} \@F$ is $\#A^1$-local. 
\end{lemma}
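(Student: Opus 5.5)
Set $\@Y := \@X \times_{\pi_0(\@X)} \@F$, where $\pi_0(\@X)$ and $\@F$ are regarded as simplicially constant sheaves. The plan is to verify $\#A^1$-locality directly from its definition. We may assume $\@X$ is simplicially fibrant and use the characterization of $\#A^1$-local fibrant objects: for every $U \in Sm/k$, the projection $U \times \#A^1 \to U$ must induce a bijection
\[
[U, \@Y]_s \to [U \times \#A^1, \@Y]_s,
\]
and similarly on all higher homotopy groups of the mapping spaces at every basepoint. Equivalently, the map of simplicial mapping spaces $\mathrm{Map}(U, \@Y) \to \mathrm{Map}(U\times\#A^1, \@Y)$ must be a weak equivalence.

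\textbf{Step 1 (fibrancy and componentwise structure).} The map $\@X \to \pi_0(\@X)$ is a morphism to a simplicially discrete sheaf. Stalkwise, $\@Y_x$ is the union of those connected components of $\@X_x$ whose class lies in $\@F_x$. So $\@Y \to \@X$ is stalkwise an inclusion of a union of connected components. In particular $\@Y \to \@X$ is a local fibration, it induces isomorphisms on all $\pi_n$ at every basepoint of $\@Y$, and $\pi_0(\@Y) = \@F$. Since $\@F \to \pi_0(\@X)$ is a monomorphism of discrete sheaves, it is a fibration in the injective structure (it has the right lifting property against trivial cofibrations, because a trivial cofibration $A\to B$ induces an isomorphism on $\pi_0$ sheaves). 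Hence $\@Y \to \@X$ is a pullback of a fibration, and $\@Y$ is fibrant. Moreover $\@Y$ is the homotopy pullback, so $\mathrm{Map}(U,\@Y) = \mathrm{Map}(U,\@X) \times_{\mathrm{Map}(U,\pi_0\@X)} \mathrm{Map}(U,\@F)$, where the last two are the discrete sets $\pi_0(\@X)(U)$ and $\@F(U)$. Thus $\mathrm{Map}(U,\@Y)$ is the union of those components of $\mathrm{Map}(U,\@X)$ lying over $\@F(U)$ under the map $\mathrm{Map}(U,\@X)\to \pi_0(\@X)(U)$.

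\textbf{Step 2 (comparison).} Since $\@X$ is $\#A^1$-local, $\mathrm{Map}(U,\@X) \to \mathrm{Map}(U\times\#A^1,\@X)$ is a weak equivalence. By Step 1, the map for $\@Y$ is the restriction of this equivalence to unions of components. It therefore suffices to show that a component of $\mathrm{Map}(U\times \#A^1,\@X)$ lies over $\@F(U\times\#A^1)$ if and only if the corresponding component of $\mathrm{Map}(U,\@X)$ lies over $\@F(U)$. Every component of $\mathrm{Map}(U\times\#A^1,\@X)$ is the pullback $f\circ \mathrm{pr}$ of some $f: U\to \@X$. The image of $f\circ\mathrm{pr}$ in $\pi_0(\@X)(U\times\#A^1)$ is the pullback of the image $\bar f$ of $f$. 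Restricting along the zero section $U \to U\times\#A^1$ returns $\bar f$. Since $\@F$ is a subsheaf, $\mathrm{pr}^*\bar f \in \@F(U\times\#A^1)$ implies $\bar f = s_0^*\mathrm{pr}^*\bar f \in \@F(U)$, and the converse holds because $\@F$ is stable under pullback. Thus the components correspond, and the map for $\@Y$ is a weak equivalence.

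\textbf{Main obstacle.} The delicate point is Step 1: making precise that $\@Y$ is fibrant (or is the homotopy fiber product) and that its mapping spaces are exactly unions of components. I would justify the homotopy-pullback claim by appealing to Proposition \ref{prop local fibration}, with $\@F \to \pi_0(\@X)$ playing the role of the local fibration: a monomorphism of discrete sheaves is stalkwise a fibration of discrete simplicial sets. This gives $\@Y \simeq \@X\times^h_{\pi_0\@X}\@F$ without needing fibrancy of $\pi_0(\@X)$ itself, and mapping spaces then commute with homotopy pullbacks. The final description uses the fact that $\mathrm{Map}(U,-)$ of a discrete fibrant sheaf is discrete. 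This fact is clear after fibrant replacement, since a sheaf of sets is already injective-fibrant.
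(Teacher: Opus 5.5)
Your proof is correct and follows essentially the paper's route. You reduce to a fibrant model of $\mathcal{X}$ in which $\mathcal{Y}=\mathcal{X}\times_{\pi_0(\mathcal{X})}\mathcal{F}\to\mathcal{X}$ is a pullback of the fibration $\mathcal{F}\to\pi_0(\mathcal{X})$ (the paper does this via right properness, you via Proposition~\ref{prop local fibration}), and then use that $\mathcal{Y}$ is a union of connected components of $\mathcal{X}$, so that the $\mathbb{A}^1$-locality of $\mathcal{X}$ restricts to $\mathcal{Y}$. The difference is only in packaging: you test locality on mapping spaces out of representables and match components via the zero section and the subsheaf property, while the paper tests on homotopy classes out of arbitrary $\mathcal{Z}$ and checks stalkwise that the relevant simplicial homotopies stay inside $\mathcal{Y}$.
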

\begin{proof}
We factor $\@X \to \pi_0(\@X)$ as $\@X \to \^{\@X} \to \pi_0(\@X)$ such that $\^X \to \pi_0(\@X)$ is simplicially fibrant and $\@X \to \^{\@X}$ is a weak equivalence. Then, as $\pi_0(\@X)$ is fibrant, so is $\^{\@X}$. Since $\@F \to \pi_0(\@X)$ is a fibration, the morphism 
\[
\@F \times_{\pi_0(\@X)} \@X \to \@F \times_{\pi_0(\^{\@X})} \^{\@X}
\]
is a weak equivalence (since $\Delta^{op}Shv(Sm/k)_{\rm Nis}$ is right proper). Thus, it suffices to show that $\@F \times_{\pi_0(\^{\@X})} \^{\@X}$ is $\#A^1$-local. 

Let $h: \@Z \times \#A^1 \to \@F \times_{\pi_0(\^{\@X})} \^{\@X}$ be a morphism in the simplicial homotopy category.
\[
\begin{xymatrix}{
\@Z \times \#A^1 \ar[r]^-h &  \@F \times_{\pi_0(\^{\@X})} \^{\@X} \ar[r] \ar[d] & \^{\@X} \ar[d]\\
&\@F \ar[r] & \pi_0(\^{\@X})} 
\end{xymatrix}
\]
Since $\^{\@X}$ is $\#A^1$-local, the two composites
\[
\@Z \rightrightarrows  \@Z \times \#A^1 \to \@F \times_{\pi_0(\@X)} \^{\@X} \to \^{\@X} \to \pi_0(\@X)
\]
are equal, where the leftmost maps are the $0$- and $1$-sections.  Now, any simplicial homotopy $\@Z \times \Delta^1 \to \^{\@X}$ connecting the two maps
\[
\@Z \rightrightarrows  \@Z \times \#A^1 \to \@F \times_{\pi_0(\@X)} \^{\@X} \to \^{\@X}
\]
must factor through $\@F \times_{\pi_0(\@X)} \^{\@X}$, as can be checked stalkwise.  Thus, it follows that $h$ factors through a morphism $\@Z \times \#A^1 \to \@F \times_{\pi_0(\@X)} \^{\@X}$ in the simplicial homotopy category.  This proves that $\@F \times_{\pi_0(\^{\@X})} \^{\@X}$ is $\#A^1$-local, as desired. 
\end{proof}

\begin{remark} Another quick way to prove Lemma \ref{lemma a1 local} is to observe that $\@F \to \pi_0(\@X)$ is a homotopy monomorphism, i.e. the projection morphisms $\@F \times_{\pi_0(\@X)}^h \@F \to \@F$ are isomorphisms. From this, one can immediately see that  $\@X \times_{\pi_0(X)} \@F \to \@X$ is also a homotopy monomorphism. Now, one can easily conclude that if $\@X$ is $\#A^1$-local, then so is $\@X \times_{\pi_0(\@X)} \@F$. (This argument was pointed out by the referee.)
\end{remark}

\begin{proposition}
\label{prop Q a1-local}
Let $\^Q$ denote the fibre product $\pi_0(Q) \times_{\pi_0(B_{\rm et}\mu)} B_{\text{\'et}} \mu$.  Then $\^Q$ is $\#A^1$-local and the canonical morphism $Q \to \^Q$ is a simplicial weak equivalence.
\end{proposition}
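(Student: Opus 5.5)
The proof has two parts. First, $\^Q$ is $\#A^1$-local by Lemma \ref{lemma a1 local}. Second, we compare simplicial homotopy sheaves of $Q$ and $\^Q$ at every base point, using Proposition \ref{prop comparing quotients} and Lemma \ref{lemma monomorphism}.

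For $\#A^1$-locality, we take $\@X = B_{\text{\'et}}\mu$. Since $\mu$ is of multiplicative type, $B_{\text{\'et}}\mu$ is $\#A^1$-local over a perfect field. The standard argument (Morel--Voevodsky) is that $\#A^1$-locality of $B_{\text{\'et}}\mu$ amounts to $\#A^1$-invariance of $\mu(-)$ and of $H^1_{\text{\'et}}(-,\mu)$ on smooth schemes. Both hold for groups of multiplicative type: reduce to $\mu_n$ and $\mathbb{G}_m$, and use homotopy invariance of $H^1_{\text{\'et}}$ with such coefficients together with Kummer theory. I expect this to be the main input not already recorded in the paper. If needed, I would cite it, or reduce it via the Kummer sequence to the $\#A^1$-locality of $B\mathbb{G}_m$ and of $\mathbb{G}_m$ as a sheaf, checking the relevant $H^1$ and $H^2$ terms. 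By Lemma \ref{lemma monomorphism}, $\pi_0(Q)$ is a subsheaf of $\pi_0(B_{\text{\'et}}\mu)$, so Lemma \ref{lemma a1 local} applies directly and shows that $\^Q$ is $\#A^1$-local.

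For the weak equivalence, first observe that the canonical map $Q \to B_{\text{\'et}}\mu$ and the map $Q \to \pi_0(Q)$ agree over $\pi_0(B_{\text{\'et}}\mu)$. Hence they induce $Q \to \^Q$. It suffices to check that this map induces isomorphisms on $\pi_0$ and on all $\pi_i$ at all local base points. This can be checked stalkwise, or via sections over essentially smooth Henselian local $U$.

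On $\pi_0$, the sheaf $\pi_0(\^Q)$ is $\pi_0(Q)\times_{\pi_0(B_{\text{\'et}}\mu)}\pi_0(B_{\text{\'et}}\mu) = \pi_0(Q)$. This uses the fact that $\pi_0(Q)$ is discrete, so the fibre product with a discrete subsheaf of $\pi_0$ just selects components. For $i\ge 1$, $\^Q \to B_{\text{\'et}}\mu$ is the inclusion of a union of components (stalkwise), so $\pi_i(\^Q,x)\cong \pi_i(B_{\text{\'et}}\mu,x)$. It therefore suffices that $\pi_i(Q,q)\to\pi_i(B_{\text{\'et}}\mu,q)$ is an isomorphism for every base point $q:U\to Q$.

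Over a Henselian local $U$, every point of $\pi_0(Q)(U)$ is the class of an element of $G(U)$. This is because $\pi_0(Q)(U)$ is the quotient $G(U)/\~G(U)$, as noted in the proof of Lemma \ref{lemma monomorphism}. So, Nisnevich locally, any base point lifts to $u:U\to G = P$. Proposition \ref{prop comparing quotients}, applied with $H=\~G$ and $P=G$, which is étale-fibrant, then gives exactly the isomorphism on $\pi_i$ for $i\ge1$, with $Q_{\text{\'et}} = B_{\text{\'et}}\mu$. Since the $\~G$-action on $G$ is not by a discrete group, one subtlety is that Proposition \ref{prop comparing quotients} is phrased for simplicially discrete $H$. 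Here $\~G$ is a sheaf of groups, so this is fine.

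The main obstacles are therefore the $\#A^1$-locality of $B_{\text{\'et}}\mu$ and the bookkeeping identifying homotopy sheaves of the fibre product $\^Q$.
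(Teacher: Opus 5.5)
Your proposal is correct and follows the paper's proof essentially step for step. You get $\A^1$-locality of $\widehat{Q}$ from that of $B_{\text{\'et}}\mu$ (the paper just cites \cite[page 137, Proposition 3.1]{Morel-Voevodsky}) together with Lemma \ref{lemma a1 local}, applicable by Lemma \ref{lemma monomorphism}; then you get the weak equivalence from three facts: the isomorphism on $\pi_0$ by construction, the fact that $\widehat{Q} \to B_{\text{\'et}}\mu$ includes a union of components, and two-out-of-three on $\pi_i$ for $i \geq 1$ via Proposition \ref{prop comparing quotients}, which is what the paper does.
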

\begin{proof}
By \cite[page 137, Proposition 3.1]{Morel-Voevodsky}, $B_{\text{\'et}} \mu$ is $\#A^1$-local. So, by Lemma \ref{lemma a1 local}, it follows that $\^Q$ is $\#A^1$-local.  The morphism $Q \to \^Q$ induces an isomorphism on $\pi_0$ by construction. Since the morphism $\^Q \to B_{\text{\'et}} \mu$ is a pullback of the morphism  $\pi_0(Q) \to \pi_0(B_{\text{\'et}} \mu)$, it induces an isomorphism on $\pi_i$ for all $i>0$, and for every basepoint $u: U \to \^Q$ where $U \in Sm/k$. The morphism $Q \to B_{\text{\'et}} \mu$ induces an isomorphism on $\pi_i$ for all $i>0$ due to Proposition \ref{prop comparing quotients}. Also, the morphism $Q \to Q_{\text{\'et}}$ factors through $Q \to \^Q$.  Thus, we see that $Q \to \^Q$ is a simplicial weak equivalence.
\end{proof}

\begin{corollary}
\label{cor key LES}
Let the notation be as above.  If $\pi_0^{\A^1}(\~G)$ is strongly $\#A^1$-invariant, then we have a long exact sequence
\[
\cdots \to \pi_1^{\A^1}(B_{\text{\'et}} \mu)\to \pi_0^{\A^1}(\~G) \to \pi_0^{\A^1}(G) \to \pi_0^{\A^1}(B_{\text{\'et}} \mu).
\]
\end{corollary}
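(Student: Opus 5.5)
The plan is to realize the sequence as the long exact sequence of $\A^1$-homotopy sheaves of a single $\A^1$-fiber sequence
\[
\~G \to G \to Q,
\]
and then use Proposition \ref{prop Q a1-local} to replace $Q$ by the $\A^1$-local object $\^Q$. This gives $\pi_i^{\A^1}(Q)=\pi_i(\^Q)$. For $i\geq 1$ these agree with $\pi_i(B_{\text{\'et}}\mu)=\pi_i^{\A^1}(B_{\text{\'et}}\mu)$. For $i=0$ we get $\pi_0(\^Q)=\pi_0(Q)$, which by Lemma \ref{lemma monomorphism} injects into $\pi_0(B_{\text{\'et}}\mu)=\pi_0^{\A^1}(B_{\text{\'et}}\mu)$. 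Since the final term is only required to receive a map whose kernel is the image of the previous map, we may therefore replace $\pi_0^{\A^1}(Q)$ by $\pi_0^{\A^1}(B_{\text{\'et}}\mu)$ without destroying exactness at $\pi_0^{\A^1}(G)$.

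First I record the simplicial fibration sequence $\~G \to G \to Q$ in $\Delta^{op}Shv(Sm/k)_{\rm Nis}$. It comes from the principal $\~G$-fibration $G\times E\~G \to Q$, which is a homotopy fiber sequence by Corollary \ref{cor principal G-fibration}; note $G\times E\~G\simeq G$. The fiber over the base point is $\~G$, with the action-on-identity map $\~G\to G$, i.e.\ $\rho$. The issue is to promote this to an $\A^1$-fiber sequence. Here I invoke Morel's criterion \cite[Theorem 6.50]{Morel-book}, in the form for homotopy principal fibrations: if the base has strongly $\A^1$-invariant $\pi_0$ (and suitable $\pi_1$) and the group's $\pi_0^{\A^1}$ is strongly $\A^1$-invariant, the sequence remains a fiber sequence after $\A^1$-localization.

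Checking the hypotheses is where the new input enters. The hypothesis of the corollary gives strong $\A^1$-invariance of $\pi_0^{\A^1}(\~G)$. For the base, Proposition \ref{prop Q a1-local} shows $Q\simeq \^Q$ is already $\A^1$-local, so $Q$ itself is $\A^1$-fibrant up to simplicial equivalence. Its $\pi_1$ is that of $B_{\text{\'et}}\mu$, namely $\mu$, which is strongly $\A^1$-invariant.

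An alternative that avoids quoting the general theorem is more direct. Apply $\A^1$-localization $L$ to the map $G\to Q$, with $Q$ already local. The homotopy fibre $F$ of $L(G)\to Q$ is then $\A^1$-local. Since $\~G$ acts on $G$ over $Q$, the map $\~G \to F$ is identified, via the $\~G$-action, with the localization of the fibre. This reduces to the statement that $\A^1$-localization commutes with the homotopy quotient by $\~G$, i.e.\ that $L(\~G)\to L(G)\to Q$ is a fibre sequence. This is exactly the content of Morel's theorem, and it is where strong $\A^1$-invariance of $\pi_0^{\A^1}(\~G)$ is used: it makes $B(L\~G)$ local.

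Once the $\A^1$-fiber sequence is established, the long exact sequence of $\A^1$-homotopy sheaves reads
\[
\cdots\to\pi_1^{\A^1}(Q)\to\pi_0^{\A^1}(\~G)\to\pi_0^{\A^1}(G)\to\pi_0^{\A^1}(Q).
\]
Substituting the identifications above finishes the proof. The main obstacle I expect is the justification of the $\A^1$-fiber sequence, precisely the point flagged as gap (2) in the introduction. To handle it, I would verify the hypotheses of Morel's Theorem 6.50 for the principal $\~G$-fibration $G\times E\~G\to Q$, with $Q$ $\A^1$-local by Proposition \ref{prop Q a1-local}.
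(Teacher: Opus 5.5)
Your proposal is correct and follows the paper's proof. You apply Morel's Theorem 6.50 to the Nisnevich homotopy principal $\~G$-fibration $\~G \to G \to Q$. You then replace $\pi_i^{\A^1}(Q)$ by $\pi_i^{\A^1}(B_{\text{\'et}}\mu)$ for $i \geq 1$ via Propositions \ref{prop comparing quotients} and \ref{prop Q a1-local}, and replace $\pi_0^{\A^1}(Q)$ using the injection of Lemma \ref{lemma monomorphism}.

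One correction: Theorem 6.50 requires only that $\pi_0^{\A^1}(\~G)$ be strongly $\A^1$-invariant. The extra hypotheses on the base that you attribute to it are not part of the theorem, and you never actually check one of them (strong $\A^1$-invariance of $\pi_0(Q)$). So your hypothesis-checking paragraph is superfluous, and the ``alternative'' paragraph is a restatement of the same theorem, not an independent argument.
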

\begin{proof}
We have a simplicial homotopy principal $\~G$-fibration
\[
\~G \to G \to Q,
\]
which is an $\A^1$-fiber sequence by \cite[Theorem 6.50]{Morel-book}.  This gives rise to an exact sequence of sheaves of groups/sets
\begin{equation}
\label{eqn long exact seq}
\cdots \to \pi_1^{\A^1}(Q) \to \pi_0^{\A^1}(\~G) \to \pi_0^{\A^1}(G) \to \pi_0^{\A^1}(Q).
\end{equation}
In this sequence, the terms $\pi_i^{\#A^1}(Q)$ for $i>1$ may be replaced by $\pi_i^{\#A^1}(B_{\text{\'et}} \mu)$ due to Propositions \ref{prop comparing quotients} and \ref{prop Q a1-local}. The term $\pi_0^{\#A^1}(Q)$ may be replaced by $\pi_0^{\#A^1}(B_{\text{\'et}} \mu)$ due to Lemma \ref{lemma monomorphism} and Proposition \ref{prop Q a1-local}. This gives us the desired long exact sequence. 
\end{proof}

\section{Proof of the main result}
\label{section main result}

The proof of \cite[Lemma 5.1]{BHS} depends upon \cite[Theorem 1.3 and Theorem 1.5]{Choudhury-Hogadi}.  However, the proof of \cite[Lemma 2.9]{Choudhury-Hogadi} contains a gap, which renders the proofs of \cite[Theorem 1.3 and Theorem 1.5]{Choudhury-Hogadi} incomplete in their generality, as of now.  In this note, we give a complete proof of \cite[Lemma 5.1]{BHS} bypassing the use of \cite[Lemma 2.9]{Choudhury-Hogadi}.  Our proof is based on the following weaker version of \cite[Theorem 1.5]{Choudhury-Hogadi}.

\begin{lemma}
\label{lemma correctedch}
Let $k$ be a perfect field.  Let $G$ be a strongly $\A^1$-invariant sheaf of groups on $Sm/k$ and $G \xrightarrow{\phi} H$ an epimorphism of sheaves. Further assume that  $\Ker(\phi)$ is central in $G$. Then $H$ is strongly $\A^1$-invariant if and only if it is $\A^1$-invariant.
\end{lemma}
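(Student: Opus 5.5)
Only the converse needs proof, since strong $\A^1$-invariance implies $\A^1$-invariance. So assume $H$ is $\A^1$-invariant and set $K := \Ker(\phi)$. Since $K$ is central in $G$, it is abelian, and we have a central extension of Nisnevich sheaves of groups
\[
1 \to K \to G \xrightarrow{\phi} H \to 1.
\]
The plan is to use Morel's criterion: a sheaf of groups $H$ is strongly $\A^1$-invariant exactly when $BH$ is $\A^1$-local. By Morel's characterization, this reduces to two facts: $H$ is $\A^1$-invariant (given), and $H^1_{\rm Nis}(X\times\A^1,H) \to H^1_{\rm Nis}(X,H)$ is a bijection for all $X \in Sm/k$.

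The first step is to show that $K$ is strongly $\A^1$-invariant. In the sequence $1\to K\to G\to H$, with $G$ strongly $\A^1$-invariant and $H$ $\A^1$-invariant, the kernel $K$ is $\A^1$-invariant. To upgrade this, I would use the fibration $BK \to BG \to BH$. Since $K$ is central, this deloops to a fibration $BG \to BH \to B^2K$ in the simplicial homotopy category. Then:
\begin{itemize}
\item $\Omega BH\simeq H$ is $\A^1$-invariant;
\item $BG$ is $\A^1$-local by Morel, since $G$ is strongly $\A^1$-invariant.
\end{itemize}
Morel's theorem that the kernel and cokernel of a morphism of strongly $\A^1$-invariant sheaves are again strongly $\A^1$-invariant does not apply directly, because $H$ is not yet known to be strongly invariant. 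Instead I would argue via cohomology: compare the long exact sequences of nonabelian cohomology for $X$ and $X\times\A^1$,
\[
K(X)\to G(X)\to H(X)\to H^1(X,K)\to H^1(X,G)\to H^1(X,H)\to H^2(X,K),
\]
which is available because the extension is central.

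The second step is to prove $\A^1$-invariance of $H^1(-,K)$ and $H^2(-,K)$ together with that of $H^1(-,H)$. The pullback along $X\times\A^1\to X$ is already an isomorphism on the terms $K$, $G$, $H$ and $H^1(-,G)$. Applying the five lemma to the part $G(X)\to H(X)\to H^1(X,K)\to H^1(X,G)$ is delicate, because the sequence ends in a pointed set. Twisting by torsors handles this: centrality makes $H^1(X,K)$ act on $H^1(X,G)$, with orbits given by the fibres of $H^1(X,G)\to H^1(X,H)$. Once $H^1(-,K)$ is known to be $\A^1$-invariant, together with $\A^1$-invariance of $H^2(-,K)$ on the relevant classes, a diagram chase shows that $H^1(-,H)$ is $\A^1$-invariant. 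This yields strong $\A^1$-invariance of $H$.

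The main obstacle is establishing $\A^1$-invariance of the cohomology of $K$, which must be done from the sequence itself. I would try first to realize $K$ as $\pi_1$ of an $\A^1$-local space. Let $F$ be the $\A^1$-localization of the homotopy fibre of $BG\to BH$. Then $\pi_1^{\A^1}(F)$ is strongly $\A^1$-invariant by Morel, and I would compare it with $K$ using the exact sequence $\pi_1^{\A^1}(BG)=G\to\pi_1^{\A^1}(BH)\to\pi_0^{\A^1}(F)$. This requires knowing $\pi_1^{\A^1}(BH)$; I expect it to equal $H$, via Morel's result that $\pi_1^{\A^1}(BH)$ is the universal strongly $\A^1$-invariant quotient of $H$ when $H$ is $\A^1$-invariant. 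Alternatively, $\pi_1^{\A^1}(BH)$ is a quotient of $H$ receiving a surjection from $G$ whose kernel is central, which might already identify it with $H$. In that case $H\cong\pi_1^{\A^1}(BH)$ is strongly $\A^1$-invariant directly, which would bypass $K$ altogether. This shortcut is the route I would try first.
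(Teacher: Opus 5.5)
Your outline of the final step matches the paper's. You compare the sequences
\[
H^1(U,G)\to H^1(U,H)\to H^2(U,K)
\]
for $U$ and $U\times\A^1$ (these exist because $K$ is central) and do a diagram chase, using twisting to handle pointed sets. But the inputs this chase needs, $\A^1$-invariance of $H^1(-,K)$ and of $H^2(-,K)$, are never obtained. This is a genuine gap. You call it the main obstacle and say it ``must be done from the sequence itself''.

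For $H^1(-,K)$ that is correct and short. Because $K$ is central, $\delta\colon H(U)\to H^1(U,K)$ is a homomorphism, and the fibres of $H^1(U,K)\to H^1(U,G)$ are exactly the cosets of $\delta(H(U))$. So $\A^1$-invariance of $H$ and of $H^1(-,G)$, together with the splitting of the pullback by the zero section, forces $H^1(U,K)\to H^1(U\times\A^1,K)$ to be bijective. Hence $K$ is strongly $\A^1$-invariant.

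For $H^2(-,K)$, no argument from the sequence is possible: $H^2(-,K)$ appears there only as the target of $H^1(-,H)$, which is precisely the unknown. The missing ingredient is Morel's theorem \cite[Theorem 5.46]{Morel-book}: over a perfect field, a strongly $\A^1$-invariant sheaf of \emph{abelian} groups is strictly $\A^1$-invariant. Applied to $K$, it gives $\A^1$-invariance of $H^2(-,K)$, and the chase then finishes the proof. This is exactly the paper's argument, and it is where the perfectness of $k$ enters; your proposal never uses that hypothesis.

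The shortcut you intend to try first is circular. The canonical map $H=\pi_1(BH)\to\pi_1^{\A^1}(BH)$ is an isomorphism if and only if $H$ is strongly $\A^1$-invariant. One direction holds because $\pi_1^{\A^1}$ of any pointed space is strongly $\A^1$-invariant; the other because $BH$ is then $\A^1$-local. So ``I expect it to equal $H$'' is the statement to be proved. Even granting that $H\to\pi_1^{\A^1}(BH)$ is surjective, neither the universal property of $\pi_1^{\A^1}(BH)$ nor the fact that $G\to\pi_1^{\A^1}(BH)$ kills the central subgroup $K$ shows that this map has trivial kernel.

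Likewise, the exact sequence $G\to\pi_1^{\A^1}(BH)\to\pi_0^{\A^1}(F)$ presupposes that $\A^1$-localization preserves the fibre sequence $BK\to BG\to BH$, which you do not justify. To make this route work you would have to supply that, for example by viewing $BG\to BH$ as a principal $BK$-fibration and invoking a result of the type of \cite[Theorem 6.50]{Morel-book}. You would also need the identification $\pi_1^{\A^1}(BK)=K$, which again requires the strong $\A^1$-invariance of $K$ obtained above.
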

\begin{proof}
We only need to show that if $H$ is $\A^1$-invariant, then it is strongly $\A^1$-invariant as the reverse implication is trivial.  Let $K:= \Ker(\phi)$. 	For every smooth $k$-scheme $U$, the short exact sequence of Nisnevich sheaves of groups
\[
1 \to K \to G \xrightarrow{\phi} H \to 1
\]
gives us an exact sequence of pointed cohomology sets
\[
1 \to H^0(U,K) \to H^0(U, G) \to H^0(U, H) \to H^1(U, K) \to H^1(U, G) \to H^1(U, H),
\]
by \cite[Ch. III, Proposition 3.3.1]{Giraud}.  Since this exact sequence is functorial in $U$ and since $G$ is strongly $\A^1$-invariant, it follows that $\A^1$-invariance of $H$ is equivalent to strong $\A^1$-invariance of $K$.  Since $K$ is a Nisnevich sheaf of abelian groups and $H$ is $\A^1$-invariant by hypothesis, it follows from \cite[Theorem 5.46]{Morel-book} that $K$ is strictly $\A^1$-invariant.

In order to show strong $\A^1$-invariance of $H$, it is enough to show that for every essentially smooth $k$-scheme $U$, the projection map $U \times \A^1 \to U$ induces an isomorphism $H^1(U, H) \to H^1(U\times \A^1_k, H)$.  Since $K$ is central, by functoriality and \cite[Ch. IV, Remarque 4.2.10]{Giraud}, we have a commutative diagram 
\[
\begin{xymatrix}{
\cdots \ar[r] &  H^1(U, G) \ar[r] \ar[d] & H^1(U, H) \ar[r] \ar[d] & H^2(U, K) \ar[d] \\
\cdots \ar[r] &  H^1(U\times \A^1_k, G) \ar[r] & H^1(U\times \A^1_k, H) \ar[r] & H^2(U\times \A^1_k, K)
}
\end{xymatrix}
\]
whose rows are exact sequences (of pointed sets).  Since $G$ is strongly $\A^1$-invariant and $K$ is strictly $\A^1$-invariant, the desired assertion follows.
\end{proof}

We are now set to give a proof of \cite[Lemma 5.1]{BHS}.  We restate it here for the convenience of the reader.

\begin{lemma}\cite[Lemma 5.1]{BHS}
\label{lemma isogeny}
Let $k$ be a field.  Let $\~G \to G$ be a central isogeny of semisimple algebraic groups, the kernel of which is a group $\mu$ of multiplicative type.  If $\pi_0^{\A^1}(\~G)$ is strongly $\A^1$-invariant, then so is $\pi_0^{\A^1}(G)$.
\end{lemma}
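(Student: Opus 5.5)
Using the long exact sequence of Corollary \ref{cor key LES} together with Lemma \ref{lemma correctedch}. Since $\pi_0^{\A^1}(\~G)$ is strongly $\A^1$-invariant, Corollary \ref{cor key LES} gives an exact sequence
\[
\pi_1^{\A^1}(B_{\text{\'et}}\mu) \to \pi_0^{\A^1}(\~G) \to \pi_0^{\A^1}(G) \to \pi_0^{\A^1}(B_{\text{\'et}}\mu).
\]
Since $B_{\text{\'et}}\mu$ is $\A^1$-local (\cite[page 137, Proposition 3.1]{Morel-Voevodsky}), its $\A^1$-homotopy sheaves are its simplicial ones: $\pi_1^{\A^1}(B_{\text{\'et}}\mu)=\mu$ and $\pi_0^{\A^1}(B_{\text{\'et}}\mu)$ is the sheaf $\mathcal{H}^1_{\text{\'et}}(-,\mu)$, which vanishes on Henselian local essentially smooth schemes only after restriction to the Nisnevich stalks if $\mu$ is not too large. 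I will therefore split $\pi_0^{\A^1}(G)$ via the image $\mathcal{I}$ of $\pi_0^{\A^1}(\~G)\to\pi_0^{\A^1}(G)$ and the image $\mathcal{C}$ of $\pi_0^{\A^1}(G)\to\pi_0^{\A^1}(B_{\text{\'et}}\mu)$.

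First step: show $\mathcal{I}$ is strongly $\A^1$-invariant. It is the quotient of the strongly $\A^1$-invariant sheaf $\pi_0^{\A^1}(\~G)$ by the image of $\mu$, which is central because $\mu$ is central in $\~G$ and the action of $\pi_1$ of the base on the fiber in the fiber sequence $\~G\to G\to Q$ comes from multiplication by central elements. By Lemma \ref{lemma correctedch} it suffices to show $\mathcal{I}$ is $\A^1$-invariant. For this I would argue that $\mathcal{I}$ is a subsheaf of $\pi_0^{\A^1}(G)$, which is $\A^1$-invariant by Morel's results on $\pi_0^{\A^1}$ of groups (or via \cite{BHS}), and that a subsheaf of an $\A^1$-invariant sheaf is $\A^1$-invariant provided it is closed under homotopies; this closure is where exactness at $\pi_0^{\A^1}(G)$ enters: a section of $\pi_0^{\A^1}(G)(U\times\A^1)$ that is the pullback of a section over $U$ lying in $\mathcal{I}(U)$ maps to zero in $\pi_0^{\A^1}(B_{\text{\'et}}\mu)(U\times\A^1)$ by $\A^1$-invariance of the latter.

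Second step: $\pi_0^{\A^1}(G)$ is a group extension $1\to\mathcal{I}\to\pi_0^{\A^1}(G)\to\mathcal{C}\to 1$, where $\mathcal{C}\subset\pi_0^{\A^1}(B_{\text{\'et}}\mu)=\mathcal{H}^1_{\text{\'et}}(-,\mu)$ is a subgroup (the connecting map is a homomorphism, as in Lemma \ref{lemma monomorphism}). $\mathcal{H}^1_{\text{\'et}}(-,\mu)$ is strictly $\A^1$-invariant since $\mu$ is of multiplicative type and $B_{\text{\'et}}\mu$ is $\A^1$-local, so it is strictly $\A^1$-invariant abelian; $\mathcal{C}$, as an $\A^1$-invariant subsheaf, is then strictly $\A^1$-invariant by \cite[Theorem 5.46]{Morel-book}. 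Finally, an extension of strongly $\A^1$-invariant sheaves of groups with strongly invariant kernel is strongly $\A^1$-invariant by the five lemma on the nonabelian $H^0$/$H^1$ sequences for $U$ versus $U\times\A^1$ (as in the proof of Lemma \ref{lemma correctedch}).

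The main obstacle is the $\A^1$-invariance of $\mathcal{I}$ and $\mathcal{C}$, i.e. that taking images in the long exact sequence does not destroy $\A^1$-invariance; I expect to have to use exactness carefully on $U\times\A^1$ together with the $\A^1$-invariance of $\pi_0^{\A^1}(G)$ and of $\mathcal{H}^1_{\text{\'et}}(-,\mu)$, and, if needed, reduce to stalks and fields where $\pi_0^{\A^1}$ is computed by $R$-equivalence classes.
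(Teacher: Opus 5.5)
There is a genuine gap, in your treatment of $\mathcal{C}$. Your decomposition $1 \to \mathcal{I} \to \pi_0^{\A^1}(G) \to \mathcal{C} \to 1$ is the paper's, and handling $\mathcal{I}$ via Lemma~\ref{lemma correctedch} plus centrality is also the paper's strategy. For $\mathcal{C}$, you argue that it is an $\A^1$-invariant subsheaf of the strictly $\A^1$-invariant sheaf $\mathcal{H}^1_{\text{\'et}}(\mu)$, hence strictly $\A^1$-invariant by \cite[Theorem 5.46]{Morel-book}.

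That theorem takes \emph{strong} $\A^1$-invariance as its hypothesis; it does not upgrade plain $\A^1$-invariance. Plain $\A^1$-invariance is automatic for any subsheaf of an $\A^1$-invariant sheaf: a section over $U \times \A^1$ equals the pullback of its restriction along the zero section. So it tells you nothing about $H^1_{\rm Nis}(-,\mathcal{C})$. Set $\mathcal{D} := \coker\left(\pi_0^{\A^1}(G) \to \mathcal{H}^1_{\text{\'et}}(\mu)\right)$. The first part of the proof of Lemma~\ref{lemma correctedch}, applied to $1 \to \mathcal{C} \to \mathcal{H}^1_{\text{\'et}}(\mu) \to \mathcal{D} \to 1$, shows that $\mathcal{C}$ is strongly $\A^1$-invariant \emph{if and only if} $\mathcal{D}$ is $\A^1$-invariant. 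That is exactly what must be proved, and your proposal gives no argument for it. Your closing paragraph places the difficulty in the $\A^1$-invariance of $\mathcal{I}$ and $\mathcal{C}$, which is free; the real difficulty is here.

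The missing input is a genuine fact about $\~G$-torsors. For henselian local $U$, the sequence $G(U) \to H^1_{\text{\'et}}(U,\mu) \to H^1_{\text{\'et}}(U,\~G)$ is exact. Since $\mu$ is central, the fibres of the second map are cosets of the image of the first. Hence $\mathcal{D}$ embeds into $\mathcal{H}^1_{\text{\'et}}(\~G)$, which the paper knows is $\A^1$-invariant by \cite[Theorem 3.9]{BHS}. Some such input is unavoidable, because $\mathcal{D}$ is precisely the image of $\mathcal{H}^1_{\text{\'et}}(\mu)$ in $\mathcal{H}^1_{\text{\'et}}(\~G)$.

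Lesser points:
\begin{enumerate}
\item Your remark that $\mathcal{H}^1_{\text{\'et}}(\mu)$ vanishes on henselian local schemes is false, e.g.\ $H^1_{\text{\'et}}(L,\mu_n)=L^\times/L^{\times n}$ for a field $L$. You do not use it, however.
\item The $\A^1$-invariance of $\pi_0^{\A^1}(G)$ is \cite[Corollary 5.2]{Choudhury}, not a result of Morel. Your ``closure under homotopies'' discussion for $\mathcal{I}$ is unnecessary.
\item Your centrality argument can be made to work, but you first need to identify the $\A^1$-connecting map with $\mu \hookrightarrow \~G \to \pi_0^{\A^1}(\~G)$. This comes from comparison with the simplicial long exact sequence, using that $Q \to \^Q$ is a simplicial equivalence to an $\A^1$-local object. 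The paper instead checks centrality on fields, via compatibility of the fibre sequence with conjugation.
\end{enumerate}
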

\begin{proof} 
Applying Corollary \ref{cor key LES}, we obtain an exact sequence of sheaves of sets
\begin{equation}
\label{eqn long exact}
\cdots \to \pi_1^{\A^1}(B_{\text{\'et}} \mu)\to \pi_0^{\A^1}(\~G) \to \pi_0^{\A^1}(G) \to \pi_0^{\A^1}(B_{\text{\'et}} \mu).
\end{equation}
From the above exact sequence, we extract the short exact sequence 
\[
1 \to \im\left(\pi_0^{\A^1}(\~G) \to \pi_0^{\A^1}(G)\right) \to \pi_0^{\A^1}(G) \to \im\left(\pi_0^{\A^1}(G) \to \pi_0^{\A^1}(B_{\text{\'et}}\mu)\right) \to 1.
\]
It thus suffices to show that $\im\left(\pi_0^{\A^1}(\~G) \to \pi_0^{\A^1}(G)\right)$ and $\im\left(\pi_0^{\A^1}(G) \to \pi_0^{\A^1}(B_{\text{\'et}}\mu)\right)$ are both strongly $\A^1$-invariant.

\noindent \underline{\emph{Proof of strong $\A^1$-invariance of $\im\left(\pi_0^{\A^1}(G) \to \pi_0^{\A^1}(B_{\text{\'et}}\mu)\right)$}:}

\noindent First note that the sheaf $\pi_0^{\A^1}(B_{\text{\'et}}\mu) = \mathcal H^1_{\text{\'et}}(\mu)$ is strongly $\A^1$-invariant by \cite[Example 4.6]{Asok-crelle}.  The short exact sequence
\[
1 \to \im\left(\pi_0^{\A^1}(G) \to \pi_0^{\A^1}(B_{\text{\'et}}\mu)\right) \to \pi_0^{\A^1}(B_{\text{\'et}}\mu) \to {\coker} \left(\pi_0^{\A^1}(G) \to \pi_0^{\A^1}(B_{\text{\'et}}\mu)\right) \to 1
\]
shows that it suffices to prove $\A^1$-invariance of ${\coker} \left(\pi_0^{\A^1}(G) \to \pi_0^{\A^1}(B_{\text{\'et}}\mu)\right)$.
%
%
For every smooth henselian local $k$-scheme $U$, we have a commutative diagram
\[
\begin{xymatrix}{
G(U) \ar[r] \ar@{->>}[d] & H^1_{\text{\'et}}(U, \mu) \ar[r] \ar[d]^-{\simeq} & H^1_{\text{\'et}}(U, \~G) \ar[r] \ar[d]^-{\simeq} & H^1_{\text{\'et}}(U, G) \\
\pi_0^{\A^1}(G)(U) \ar[r]  & \pi_0^{\A^1}(B_{\text{\'et}}\mu)(U) \ar[r]  & \pi_0^{\A^1}(B_{\text{\'et}}\~G)(U) & } 
\end{xymatrix}
\]
in which the top row is exact.  Note that the cokernel of the map $\pi_0^{\A^1}(G)(U) \to \pi_0^{\A^1}(B_{\text{\'et}}\mu)(U)$ is contained in $\pi_0^{\A^1}(B_{\text{\'et}}\mu)(U) \simeq H^1_{\text{\'et}}(U, \~G)$.  By \cite[Theorem 3.9]{BHS}, the sheaf $\pi_0^{\A^1}(B_{\text{\'et}}\~G) = \mathcal H^1_{\text{\'et}}(\~G)$ is $\A^1$-invariant.  Thus, the cokernel of $\pi_0^{\A^1}(G) \to \pi_0^{\A^1}(B_{\text{\'et}}\mu)$ is an $\A^1$-invariant sheaf as required.

\noindent \underline{\emph{Proof of strong $\A^1$-invariance of $\im\left(\pi_0^{\A^1}(\~G) \to \pi_0^{\A^1}(G)\right)$}:}

\noindent Note that $\pi_0^{\A^1}(G)$ is $\A^1$-invariant by \cite[Corollary 5.2]{Choudhury}.  By the long exact sequence \eqref{eqn long exact} and Lemma \ref{lemma correctedch}, the claim follows if the image of $\pi_1^{\A^1}(B_{\text{\'et}} \mu)\to \pi_0^{\A^1}(\~G)$ is central.  In order to show this, it suffices to show by \cite[Corollary 4.17]{Choudhury} that for all finitely generated field extensions $L/k$, the image of $\pi_1^{\A^1}(B_{\text{\'et}} \mu)(L)\to \pi_0^{\A^1}(\~G)(L)$ is central.  Since $\~G\to \pi_0^{\A^1}(\~G)$ is an epimorphism of Nisnevich sheaves, it suffices to show that for any element $x\in \~G(L)$, the diagram
\begin{equation}\label{bmucentral}
\begin{xymatrix}{
 \pi_1^{\A^1}(B_{\text{\'et}} \mu)(L) \ar[r] \ar@{=}[d] & \pi_0^{\A^1}(\~G)(L)\ar[d]^{\-c_x} \\
 \pi_1^{\A^1}(B_{\text{\'et}} \mu)(L)\ar[r]& \pi_0^{\A^1}(\~G)(L)
 }
\end{xymatrix}
\end{equation}
commutes, where $\-c_x$ denotes the endomorphism of $\pi_0^{\A^1}(\~G)$ induced by the conjugation $\~G \xrightarrow{c_x}\~G$ by $x \in \~G(L)$. If $\-{x}$ denotes the image of $x$ in $G(L)$, then the diagram
\[
\begin{xymatrix}{
 1 \ar[r] &  \mu \ar[r] \ar@{=}[d]  & \~G \ar[r] \ar^{c_x}[d] & G \ar^{c_{\overline{x}}}[d]\\
 1 \ar[r] &  \mu  \ar[r] & \~G \ar[r] & G}
\end{xymatrix}
\]
commutes.  By construction of the fiber sequence $\~G \to G \to B_{\text{\'et}} \mu$, it follows that the diagram
\[
\begin{xymatrix}{
   \~G \ar[r] \ar^{c_x}[d] & G \ar^{c_{\overline{x}}}[d] \ar[r] &  B_{\text{\'et}}\mu  \ar@{=}[d]  \\
  \~G \ar[r] & G \ar[r]  &  B_{\text{\'et}}\mu}
\end{xymatrix}
\]
commutes.  The corresponding commutative diagram with the long exact sequence \eqref{eqn long exact} as rows gives us the required commutativity of \eqref{bmucentral}.
 \end{proof}

\subsection*{Acknowledgements}
The second-named author thanks Tom Bachmann for pointing out a gap in the proof of \cite[Lemma 2.9]{Choudhury-Hogadi}.  The authors thank the anonymous referee for a very careful reading and for some pointed questions, particularly regarding the long exact sequence in Corollary \ref{cor key LES}, which led to substantial improvements to this note.


\begin{thebibliography}{9999}

\bibitem{Asok-crelle}
A. Asok:
\emph{Birational invariants and $\A^1$-connectedness},
J. Reine Angew. Math. 681 (2013), 39--64.

\bibitem{BHS} 
C. Balwe, A. Hogadi, A. Sawant:
\emph{Strong $\A^1$-invariance of $\A^1$-connected components of reductive algebraic groups} J. Topol. 16 (2023), no. 2, 634--649.

\bibitem{Choudhury}
U. Choudhury:
\emph{Connectivity of motivic H-spaces},
Algebr. Geom. Topol. 14 (2014), no. 1, 37--55.


\bibitem{Choudhury-Hogadi}
U. Choudhury, A. Hogadi:
\emph{The Hurewicz map in motivic homotopy theory},
Annals of $K$-theory, Vol. 7 (2022), No. 1, 179--190.


\bibitem{Giraud} 
J. Giraud:
{\it Cohomologie non ab{\'e}lienne}, Die Grundlehren der mathematischen Wissenschaften, Band {\bf 179}, Springer-Verlag, Berlin-New York, 1971. 

\bibitem{Goerss-Jardine} 
P. Goerss, J. Jardine: 
\emph{Simplicial homotopy theory}, Birkhäuser Basel, (2009). 


\bibitem{Morel-book}
F. Morel: 
\emph{$\mathbb A^1$-algebraic topology over a field},
Lecture Notes in Mathematics, Vol. 2052, Springer, Heidelberg, 2012.
 

\bibitem{Morel-Voevodsky}
F. Morel, V. Voevodsky:
\emph{$\A^1$-homotopy theory of schemes},
Inst. Hautes \'Etudes Sci. Publ. Math. 90 (1999), 45--143.

\bibitem{Serre-Galois-cohomology}
J.-P. Serre:
\emph{Galois cohomology},
Springer Monographs in Mathematics, Springer Berlin, \
Heidelberg, 2001.

\end{thebibliography}
\end{document}